\documentclass[10pt]{article}
\usepackage{amsmath}
\usepackage{amsthm}
\usepackage{amsfonts}
\usepackage{mathrsfs}
\usepackage{amsfonts}
\usepackage{amssymb,amsfonts,amsmath, psfrag,eepic,colordvi,epsfig}
\usepackage{multirow} 
\usepackage{array} 

\usepackage{longtable}

\usepackage{cite}

\topskip 
\numberwithin{equation}{section}
\newtheorem{theorem}{Theorem}[section]

\newtheorem{conjecture}[theorem]{Conjecture}

\newcommand{\Rmnum}[1]{\uppercase\expandafter{\romannumeral #1}}

\begin{document}
	\parskip 7pt
	
	\pagenumbering{arabic}
	\def\sof{\hfill\rule{2mm}{2mm}}
	\def\ls{\leq}
	\def\gs{\geq}
	\def\SS{\mathcal S}
	\def\qq{{\bold q}}
	\def\MM{\mathcal M}
	\def\TT{\mathcal T}
	\def\EE{\mathcal E}
	\def\lsp{\mbox{lisp}}
	\def\rsp{\mbox{rasp}}
	\def\pf{\noindent {\it Proof.} }
	\def\mp{\mbox{pyramid}}
	\def\mb{\mbox{block}}
	\def\mc{\mbox{cross}}
	\def\qed{\hfill \rule{4pt}{7pt}}
	\def\pf{\noindent {\it Proof.} }
	\textheight=22cm

	\begin{center}
		{\Large\bf  A proof of  Andrews-El Bachraoui's
		  conjecture on the parity of coefficients 
		   of a $q$-series  }
	\end{center}
	
	\begin{center}

		Eric H. Liu$^{1}$ and Ernest X.W. Xia$^{2}$
		\\[10pt]

		$^{1}$School of Statistics and
		Information,\\
		Shanghai University of International
		Business and Economics,\\
		Shanghai, 201620, P. R. China
		
		$^{2}$School of Mathematical Sciences, \\
		Suzhou University of Science and
		Technology, \\
		Suzhou,  215009, Jiangsu,
		P. R. China

		Email: 
		$^{1}$liuhai@suibe.edu.cn,
		$^{2}$ernestxwxia@163.com

	\end{center}

		\noindent {\bf Abstract.}
			Recently, Andrews
		and El Bachraoui studied a partition function  $s_1(n)$, 
		which counts   the number of two-color partitions into
		distinct parts of $n$ whose smallest part occurs in one prescribed color only, while every
		larger part may occur in either color or in both colors.
		They obtained  a complete description modulo 4 for $s_1(n)$. They also considered  a $q$-series 
		$T_{o}(q)$  which is the   odd   companion series   of  the generating function for 
		$s_1(n)$.  At the end of their paper, they presented  a conjecture
		on the parity of the coefficients of $T_o(q)$.
		In this paper, we confirm  this conjecture.
		Moreover,  we establish  an infinite family 
		of congruences modulo 8 for the coefficients 
		of $S_1(q)$ and prove that  the set of integers satisfying $s_1(n)\equiv 0\pmod 8$ has natural density one.

		\noindent {\bf Keywords:}   integer partitions, congruences,  two-color partitions,   $q$-series, false
		theta functions.

		\noindent {\bf AMS Subject
			Classification:} 11P81, 05A17.

		\section{Introduction }
		\allowdisplaybreaks
		
	In recent years, 
	 	integer partitions in which each part may occur in two colors (red and blue)  have
		been studied extensively.  For references on two-color partitions
		closely related to the present work, 
		    see, for example \cite{Andrews-Bachraoui,Banerjee-1,Chern,Sun,Xia}.
		Very recently,  Andrews
		and El Bachraoui \cite{Andrew} considered 
		 a series  $S_1(q)$ which is the generating 
		  function of a partition function $s_1(n)$.
    The partition function 		 
  $s_1(n)$ denotes    the number of two-color partitions into
		distinct parts of $n$ whose smallest part occurs in one prescribed color only, while every
		larger part may occur in either color or in both colors. 
		 	Recently, Andrews
		 and El Bachraoui \cite{Andrew} proved that 
		  \begin{align}
		  	\sum_{n=0}^\infty s_1(n)q^n:=S_1(q)
		  	 =\sum_{n=0}^\infty
		  	  (-q^{n+1};q)_\infty^2 q^n,\label{0-1}
		  \end{align}
	Here and throughout, we assume that  $q$
	is a complex number with $|q|<1$ 
	and adopt the following standard
	$q$-series notation:
	\begin{align*}
		(a;q)_\infty:=&\prod_{k=0}^\infty
		(1-aq^k),\qquad
		(a;q)_n:=\frac{(a;q)_\infty}
		{(aq^n;q)_\infty}.
	\end{align*}

	The series $S_1(q)$  also appears in Andrews'
	 work on concave and
	convex compositions \cite{Andrews2013}. Let $v_d(n)$
	 denotes the number of strictly concave
	  compositions of $n$. Andrews \cite{Andrews2013} proved that
	  \[
	  \sum_{n=0}^\infty v_d(n)q^n=S_1(q).
	  \]
	  Therefore $s_1(n)=v_d(n)$.  
	   Andrews
and El Bachraoui \cite{Andrew}  established 
 the following complete description modulo 4 for $s_1(n)$.
 
 \begin{theorem}
 	For $n\geq 0$, 
 	 	 \begin{align*}  
s_1(n)= \left\{
 	\begin{aligned}    
 		(-1)^m \pmod 4
 		,\qquad \qquad  &  {\rm if  \ }  n=m(m+1)/2\ {\rm for \ some}
 		\ m\in \mathbb{N},
 		\\
 		0 \pmod 4
 		,\qquad \qquad  &  {\rm otherwise }.
 	\end{aligned} \right.
 \end{align*} 
  \end{theorem}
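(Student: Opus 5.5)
The plan is to reduce $S_1(q)$ modulo $4$ to a series with an exact closed form in terms of a false theta function. The key elementary observation is that for any integer power series $x$,
\[
(1+x)^2=(1-x)^2+4x\equiv (1-x)^2 \pmod 4 .
\]
Applying this factor by factor to the infinite product (all series involved have integer coefficients, and congruences modulo $4$ are preserved under multiplication, with only finitely many factors affecting any given coefficient), we get
\[
(-q^{n+1};q)_\infty^2\equiv (q^{n+1};q)_\infty^2 \pmod 4 .
\]
Hence by \eqref{0-1},
\[
S_1(q)\equiv \sum_{n=0}^\infty q^n (q^{n+1};q)_\infty^2 = (q;q)_\infty^2\sum_{n=0}^\infty \frac{q^n}{(q;q)_n^2}\pmod 4 .
\]
The theorem is then equivalent to the exact identity
\begin{equation*}
\sum_{n=0}^\infty \frac{q^n}{(q;q)_n^2}=\frac{1}{(q;q)_\infty^2}\sum_{m=0}^\infty (-1)^m q^{m(m+1)/2},
\end{equation*}
which is a classical false theta identity of Rogers type. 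A quick check of low-order terms ($1-q+0\cdot q^2-\cdots$) agrees with it. Once it holds, the coefficient of $q^n$ in $S_1(q)$ is congruent modulo $4$ to $(-1)^m$ when $n=m(m+1)/2$ and to $0$ otherwise, which is exactly the statement.

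To prove the identity I would use Heine's transformation
\[
{}_2\phi_1(a,b;c;q,z)=\frac{(b;q)_\infty(az;q)_\infty}{(c;q)_\infty(z;q)_\infty}\,{}_2\phi_1(c/b,z;az;q,b),
\]
taking $c=q$ and $z=q$ and letting $a\to0$. This yields
\[
\sum_n \frac{q^n}{(q;q)_n^2}=\frac{(b;q)_\infty}{(q;q)_\infty^2}\sum_k \frac{(q/b;q)_k(q;q)_k}{(q;q)_k}\,b^k
=\frac{(b;q)_\infty}{(q;q)_\infty^2}\sum_k (q/b;q)_k\, b^k .
\]
The remaining task is to show that the $b$-dependence collapses, and then to specialise $b$ suitably. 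As $b\to 0$, the terms $(q/b;q)_k b^k$ behave like $(-1)^k q^{k(k+1)/2}$ plus lower-order corrections. Handling those corrections is the delicate point.

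If this route is messy, I would switch to the alternative of expanding one copy of $(q^{n+1};q)_\infty$ via Euler's identity,
\[
(q^{n+1};q)_\infty=\sum_k \frac{(-1)^k q^{k(k+1)/2+nk}}{(q;q)_k},
\]
and then summing over $n$ first using the $q$-binomial theorem on $\sum_n q^{n(k+1)}(q^{n+1};q)_\infty$.

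I expect the main obstacle to be exactly this verification of the false theta identity. The rest, namely the mod $4$ reduction and the reading off of coefficients, is routine. If all else fails, the identity appears in the literature (Rogers; Andrews' work on false theta functions) and could be quoted.
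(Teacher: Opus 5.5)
Your proof is correct, but it takes a different route from the paper. The paper does not prove this theorem itself; it cites Andrews and El Bachraoui. The result follows at once from their exact identity \eqref{3-1}, $S_1(q)=\sum_{n\ge0}(-1)^nq^{n(n+1)/2}+4qT_o(q)$, because $T_o(q)$ has integer coefficients.

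You instead use $(1+x)^2\equiv(1-x)^2\pmod 4$ to pass to $\sum_{n\ge0}q^n(q^{n+1};q)_\infty^2$, and then evaluate that series exactly. Write up your second route, since it closes in one line. After expanding one factor by Euler's identity, the inner sum is $\sum_{n\ge0}q^{n(k+1)}(q^{n+1};q)_\infty=(q;q)_\infty/(q^{k+1};q)_\infty=(q;q)_k$. This cancels the $1/(q;q)_k$ and leaves exactly $\sum_{k\ge0}(-1)^kq^{k(k+1)/2}$.

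Your Heine display is misstated. With $a\to0$ and $c=z=q$, the left side is $\sum_n(b;q)_nq^n/(q;q)_n^2$, not $\sum_n q^n/(q;q)_n^2$, so the $b$-dependence does not ``collapse.'' You simply let $b\to0$ on both sides. Nothing there is delicate: $(q/b;q)_kb^k=\prod_{j=1}^k(b-q^j)$ is a polynomial in $b$ equal to $(-1)^kq^{k(k+1)/2}$ at $b=0$, and the series converges uniformly for $|b|\le r<1$.

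As for what each approach buys: yours is short and self-contained, but it controls $S_1(q)$ only modulo $4$. The Andrews--El Bachraoui identity pins down the error term exactly as $4qT_o(q)$. Combined with the parity formula for $t_o(n)$, that is what the paper uses to obtain its modulo $8$ results for $s_1(n)$.
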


     Andrews
  and El Bachraoui  \cite{Andrew}   considered 
  the odd and even companion series 
  \begin{align*}
	\sum_{n=0}^\infty t_e(n) q^n:=T_e(q)=
\sum_{m=0}^\infty \frac{(-1;q)_{2m}}{(q^2;q^2)_{m }}q^{2m}
  \end{align*}
 and 
  \begin{align}\label{m-2}
	\sum_{n=0}^\infty t_o(n) q^n:=T_o(q)=
	\sum_{m=0}^\infty \frac{(-q;q)_{2m}}{(q;q^2)_{m+1}}q^{2m}.
	\end{align}
 Andrews
and El Bachraoui \cite{Andrew}  also 	 proved that 
	 \[
	 S_1(q)=\frac{(q^2;q^2)_\infty^2}{(q;q)_\infty}T_e(q)+2qT_o(q).
	 \]
	
At the end of their paper \cite{Andrew}, 	 Andrews
and El Bachraoui   presented the following conjecture
  on  the parity of the coefficients 
  of $S_1(q)$. 
		
		\begin{conjecture}\label{C-1}
If $8n +9$ has a prime divisor $p \equiv  5, 7 \pmod 8$ raised  to odd exponent,
 equivalently,  if $8n+9$ is not represented by  $x^2+2y^2$, 
 then
	\begin{align}\label{t-1}
 t_o(n) \equiv 0 \pmod 2. 
 \end{align}
		\end{conjecture}
		
		The first goal of this paper is to confirm 
		 Conjecture \ref{C-1}. 
		
\begin{theorem}\label{Th-1}
	Conjecture \ref{C-1} is true. 
\end{theorem}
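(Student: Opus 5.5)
We would begin by reducing \eqref{m-2} modulo $2$. Since $(-q;q)_{2m}\equiv (q;q)_{2m}=(q;q^2)_m(q^2;q^2)_m \pmod 2$, we get
\begin{align*}
T_o(q)\equiv \sum_{m=0}^\infty \frac{(q^2;q^2)_m\, q^{2m}}{1-q^{2m+1}} \pmod 2 .
\end{align*}
The goal is to rewrite the right-hand side, modulo $2$, as a double series $\sum q^{Q(a,b)}$. We want $Q$ to be a binary quadratic form with the following property: $8Q(a,b)+9$ is always represented by $x^2+2y^2$. Once we have such an expression, the coefficient of $q^n$ in $T_o(q)$ is congruent mod $2$ to the number of pairs $(a,b)$ with $Q(a,b)=n$. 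If $8n+9$ is not represented by $x^2+2y^2$, there are no such pairs, so $t_o(n)\equiv 0\pmod 2$. Note that $x^2+2y^2=8n+9$ forces $x$ odd and $y$ even, so the target representations are $8n+9=(2k+1)^2+8v^2$. We therefore expect exponents of the shape $k(k+1)/2+v^2-1$, possibly restricted by a Hecke-type inequality between $k$ and $v$.

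To obtain the double series, we would expand $1/(1-q^{2m+1})=\sum_{j\ge0}q^{(2m+1)j}$. We then treat the resulting $\sum_m (q^2;q^2)_m z^m$ with $z=q^{2+2j}$ by Fine's identities, or by the finite $q$-binomial expansion of $(q^2;q^2)_m$ summed over $m$. The aim is a false-theta or Hecke-type expansion of the form
\begin{align*}
\sum_{m\ge0}\frac{(q^2;q^2)_m q^{2m}}{1-q^{2m+1}} = \sum_{n\ge0}\sum_{|j|\le n}\epsilon(n,j)\, q^{A n^2+Bn-Cj^2+\cdots},
\end{align*}
where $\epsilon(n,j)=\pm1$. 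An alternative route, if cleaner, is to use a Bailey pair relative to $(q;q^2)$ and insert it into the Bailey lemma, which produces exactly such sums over $|j|\le n$. Modulo $2$ the signs $\epsilon$ disappear.

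The main obstacle is the shape of the quadratic form. A Hecke-type identity naturally yields an \emph{indefinite} form $Q$, while the conjecture concerns the positive form $x^2+2y^2$. We would handle this with a change of variables, something like $x=2n+1$, $y$ a linear combination of $n$ and $j$, or the other way around. The target is to show that the indefinite representation $8n+9=X^2-2Y^2$ over the cone $|j|\le n$ corresponds to a representation by $x^2+2y^2$ of the same number. As a fallback, we would use the identity in $\mathbb{Z}[\sqrt{-2}]$ versus $\mathbb{Z}[\sqrt2]$ that both forms represent the same primes up to the congruence class, namely $p\equiv1,3\pmod 8$ for $x^2+2y^2$ and $p\equiv\pm1\pmod8$ for $x^2-2y^2$. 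We would then argue prime by prime that a prime $p\equiv5,7\pmod8$ to an odd power in $8n+9$ obstructs both.

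If that fails, we would instead pair off terms $(n,j)\leftrightarrow(n,-j)$. This pairing cancels everything modulo $2$ except $j=0$ or a boundary diagonal, leaving a one-variable theta-like sum times a theta function. We would then identify this product directly with $\sum q^{k(k+1)/2+v^2-1}$, restricted to $8n+9=(2k+1)^2+8v^2$, which would finish the proof.
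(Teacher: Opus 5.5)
\textbf{There is a genuine gap: the identity the whole proof depends on is never produced.} Your reduction
\[
T_o(q)\equiv\sum_{m\ge0}\frac{(q^2;q^2)_m\,q^{2m}}{1-q^{2m+1}}\pmod 2
\]
is correct, but it is the only step you actually carry out. The proof hinges on expanding this series, modulo $2$, as a sum over a \emph{positive definite} binary quadratic form. The proposal only names tools for this (Fine's identities, the $q$-binomial theorem, the Bailey lemma). By your own prediction they would lead to an indefinite Hecke-type form, and that obstacle is left unresolved.

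The paper avoids this by not starting from the definition. It uses the Andrews--El Bachraoui identity
\[
2qT_o(q)=\frac{(q^2;q^2)_\infty^2}{(q;q)_\infty^2}\,P(q)-\sum_{n\ge0}(-1)^nq^{n(n+1)/2},
\]
in which an eta quotient congruent modulo $2$ to the theta function $(q^2;q^2)_\infty$ multiplies the false theta function $P(q)$. So definiteness is built in. The real work is dividing by $2$:
\begin{itemize}
\item the pentagonal number theorem gives $P(q)=2\sum_{n\ge0}q^{n(6n+1)}(1-q^{4n+1})-(q;q)_\infty$;
\item Gauss's identity gives $\frac{(q^2;q^2)_\infty^2}{(q;q)_\infty}+\sum_{n\ge0}(-1)^nq^{n(n+1)/2}=2\sum_{n\ge0}q^{n(2n+1)}$.
\end{itemize}
This yields an exact formula for $qT_o(q)$. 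Its reduction modulo $2$ is $\sum_{k\in\mathbb{Z}}q^{k(3k+1)}\sum_{m\ge0}q^{6m^2+m}(1+q^{4m+1})+\sum_{m\ge0}q^{m(2m+1)}$. The exponents correspond to $2(6k+1)^2+(12m+1)^2=3(8n+9)$, the same with $12m+5$, or $8n+9=(4m+1)^2$. Note that it is $3(8n+9)$, not $8n+9$, that is represented, so your ansatz $k(k+1)/2+v^2-1$ is not what arises. One then concludes because multiplying by $3$ does not change the parity of $\nu_p$ for $p\equiv5,7\pmod 8$.

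\textbf{None of your proposed ways out of the indefinite case works.}

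\emph{Change of variables.} No linear change of variables takes $X^2-2Y^2$ to $x^2+2y^2$, since signature is invariant. The two forms also represent different integers, so there is no general correspondence between their representations of a given number.

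\emph{Prime-by-prime fallback.} This contradicts your own remark that $x^2-2y^2$ represents the primes $p\equiv\pm1\pmod 8$: primes $p\equiv7\pmod 8$ do not obstruct $x^2-2y^2$. For example, $161=8\cdot19+9=7\cdot23=13^2-2\cdot2^2$, yet $161$ is not of the form $x^2+2y^2$. An indefinite expansion therefore cannot show that the relevant coefficients vanish. At best a separate parity argument at such primes would be needed, and you do not give one.

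\emph{Pairing $(n,j)\leftrightarrow(n,-j)$.} This cancels modulo $2$ only when the exponent is even in $j$. What survives is a one-variable series (the $j=0$ or boundary terms), supported on a quadratic progression with $O(\sqrt X)$ terms up to $X$. That is far too sparse on its own. By the formula above, $t_o(n)$ is odd whenever $8n+9=a^2+2b^2$ is a prime $\equiv1\pmod 8$ with $|a|<2|b|$. The extra ``theta function'' factor you invoke would have to come from an identity of the shape (theta function)$\times$(Hecke or false theta series). That is exactly the input the paper takes from Andrews--El Bachraoui, and your proposal does not supply it.
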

{\bf 
Remark}. After the present manuscript was submitted to ArXiv, Banerjee informed us that he and Bringmann \cite{Banerjee} had also proved Conjecture \ref{C-1} independently and simultaneously, using substantially different methods.

We also prove an infinite families 
 of congruences modulo 8.

\begin{theorem}\label{Th-2}
	Let $p$ be a prime with $p\equiv 5,7 \pmod 8$.
	Then for $n,k\geq 0$ with $p\nmid n$,
	\begin{align}\label{t-2}
	s_1\left(p^{2k+1}n +\frac{p^{2k+2}-1}{8}\right) \equiv 0 \pmod 8.
	\end{align}
\end{theorem}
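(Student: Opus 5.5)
The plan is to reduce the statement to an arithmetic fact about $8N+1$, where
\[
N=p^{2k+1}n+\frac{p^{2k+2}-1}{8}.
\]
A direct computation gives
\[
8N+1=p^{2k+2}+8p^{2k+1}n=p^{2k+1}(p+8n).
\]
Since $p\nmid n$ and $p$ is odd, $p\nmid p+8n$. Hence $p$ divides $8N+1$ to the exact odd power $2k+1$. Because $p\equiv 5,7\pmod 8$, the integer $8N+1$ is then neither a perfect square nor of the form $x^2+2y^2$. So it suffices to show that $s_1(N)\bmod 8$ depends only on such representations: I will aim to show that $s_1(N)\equiv 0\pmod 8$ whenever $8N+1$ is not represented by $x^2+2y^2$. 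Note that any representation $8N+1=x^2+2y^2$ forces $x$ odd and $y$ even. Writing $y=2z$, the relevant generating function is
\[
\sum_{x\ {\rm odd},\, z\in\mathbb{Z}} q^{(x^2-1)/8+z^2},
\]
which is essentially $\psi(q)\varphi(q)$ with $\psi(q)=\sum_{m\ge0}q^{m(m+1)/2}$ and $\varphi(q)=\sum_{z}q^{z^2}$.

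First, I would use the mod~$4$ theorem of Andrews and El Bachraoui stated above. It says $S_1(q)\equiv \sum_{m\ge 0}(-1)^mq^{m(m+1)/2}\pmod 4$, so we may write
\[
S_1(q)=\sum_{m\ge0}(-1)^m q^{m(m+1)/2}+4G(q).
\]
The theta part contributes at $q^N$ only if $8N+1=(2m+1)^2$ is a square, which we have excluded. Hence it remains to show that $G(q)\bmod 2$ is supported on exponents $N$ for which $8N+1$ is represented by $x^2+2y^2$.

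Second, to get at $G$, I would use the decomposition
\[
S_1(q)=\frac{(q^2;q^2)_\infty^2}{(q;q)_\infty}T_e(q)+2qT_o(q).
\]
The coefficient of $q^N$ in $qT_o(q)$ is $t_o(N-1)$, and $8(N-1)+9=8N+1$. So Theorem \ref{Th-1} (Conjecture \ref{C-1}, which I take as proved in the paper) already gives $2t_o(N-1)\equiv 0\pmod 4$ exactly in our situation. To reach modulus~$8$ I need $t_o(N-1)\equiv 0\pmod 4$, so I would sharpen the argument for Theorem \ref{Th-1} to a statement modulo~$4$. The route is to write $T_o(q)$ through its false-theta / Hecke-type double-sum representation and reduce it with the $2$-adic identities
\[
(q;q)_\infty^2\equiv(q^2;q^2)_\infty\pmod 2,\qquad (q;q)_\infty^4\equiv(q^2;q^2)_\infty^2\pmod 4.
\]
The goal is $T_o(q)\equiv A(q)+2B(q)\pmod 4$, where $qA(q)$ and $qB(q)$ are both built from $\psi(q)\varphi(q)$-type products, possibly twisted by $q\mapsto -q$ or $q\mapsto q^2$. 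Their coefficients at $q^N$ vanish unless $8N+1=x^2+2y^2$. Similarly, I would reduce the even part
\[
\frac{(q^2;q^2)_\infty^2}{(q;q)_\infty}T_e(q)=\psi(q)T_e(q)
\]
modulo~$8$. After subtracting the theta part above, the remainder should again be a combination of $\psi(q)\varphi(q)$-type series times $4$, or terms vanishing modulo~$8$.

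The main obstacle is this $2$-adic reduction of $T_e$ and $T_o$ to modulus $8$ and $4$ respectively. The difficulty is that false theta series do not collapse to theta products modulo~$2$ as easily as genuine theta functions do. If the paper's proof of Theorem \ref{Th-1} yields only a parity statement, I would first look for a Hecke-type identity of the form
\[
S_1(q)=\sum_{n\ge0}\sum_{|j|\le n}(\pm1)\,q^{Q(n,j)},\qquad 8Q(n,j)+1=x^2+2y^2,
\]
in the spirit of Andrews' concave-composition work. Such an identity would let me pair terms so that each representation class of $8N+1$ contributes a multiple of $8$ apart from a controlled theta part. Once $s_1(N)\bmod 8$ is expressed through representation numbers $\sum_{8N+1=x^2+2y^2}c(x,y)$, the congruence follows immediately from the vanishing established in the first paragraph.
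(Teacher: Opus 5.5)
\paragraph{Assessment.} Your arithmetic reduction is correct and matches the paper's. You show $8N+1=p^{2k+1}(p+8n)$ with $p\nmid p+8n$, so $8N+1$ (the paper uses $24N+3=3(8N+1)$) is neither a square nor of the form $x^2+2y^2$. Writing $S_1(q)=\sum_{m\ge0}(-1)^mq^{m(m+1)/2}+4G(q)$ then correctly reduces the problem to the parity of the coefficients of $G$.

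\paragraph{The gap.} The problem is how you identify $G$. You split $S_1=\psi(q)T_e(q)+2qT_o(q)$ and try to make each piece vanish modulo $8$ at $q^N$ separately. That forces the sub-goal $t_o(N-1)\equiv 0\pmod 4$, and this sub-goal is false. Take $N=13$, which is $p=5,k=0,n=2$ and also $p=7,k=0,n=1$. Expanding \eqref{m-2} term by term, the summands $m=0,1,\dots,6$ contribute $1,14,32,22,9,3,1$ to the coefficient of $q^{12}$. So $t_o(12)=82\equiv 2\pmod 4$, and $2t_o(12)\equiv 4\pmod 8$. Correspondingly, the coefficient of $q^{13}$ in $\psi(q)T_e(q)$ is $164\equiv 4\pmod 8$. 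Only the sum $s_1(13)=328$ is divisible by $8$. Hence neither the mod-$4$ sharpening of Theorem~\ref{Th-1} nor your claim that $\psi T_e$ minus the theta part is $4$ times a series supported on represented exponents can be proved. The Hecke-type fallback is a hope rather than an argument.

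\paragraph{The missing identity.} What you need is the other Andrews--El Bachraoui identity \eqref{3-1},
\[
S_1(q)=\sum_{n\ge0}(-1)^nq^{n(n+1)/2}+4qT_o(q).
\]
It says your $G$ is exactly $qT_o(q)$. Equivalently, $\psi(q)T_e(q)=\sum_{n\ge0}(-1)^nq^{n(n+1)/2}+2qT_o(q)$, which explains the cancellation above. It gives $s_1(N)=\mu(N)+4t_o(N-1)$, so only the parity of $t_o(N-1)$ matters:
\begin{itemize}
\item $\mu(N)=0$, because $8N+1$ is not a square;
\item $t_o(N-1)$ is even by Theorem~\ref{Th-1}, since $8(N-1)+9=8N+1$ contains $p$ to the odd power $2k+1$.
\end{itemize}
The paper does exactly this. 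It substitutes its mod-$2$ expansion \eqref{v-2} of $qT_o(q)$ to get $s_1(n)\equiv 4u(n)+4\chi(n-1)+\mu(n)\pmod 8$, then checks that each term vanishes at $N$.
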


For example, setting $p=5$ and $k=0$ in \eqref{t-2}, we  deduce 
 that for $n\geq 0$, 
 \[
 s_1(25n+8) \equiv  s_1(25n+13)\equiv  s_1(25n+18)\equiv  s_1(25n+23)
 \equiv 0\pmod 8.
 \]

\begin{theorem}\label{Th-3}
	We have 
	\begin{align}\label{t-3}
	\lim\limits_{n \rightarrow \infty }\frac{\{m|s_1(m) \not\equiv 0 \pmod 8, 0\leq m\leq n\}}{n}=0. 
	\end{align}
	
\end{theorem}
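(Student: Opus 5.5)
The plan is to show that the exceptional set in \eqref{t-3} is contained in a set of integers of density zero, by proving the following support statement: \emph{if $s_1(m)\not\equiv 0 \pmod 8$, then every prime $p\equiv 5,7 \pmod 8$ divides $8m+1$ to an even power.} Equivalently, $8m+1$ is represented by $x^2+2y^2$.

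This statement is natural given Theorem \ref{Th-2}. Put $m=p^{2k+1}n+(p^{2k+2}-1)/8$. Then
\[
8m+1=p^{2k+1}(8n+p),
\]
and $p\nmid 8n+p$ exactly when $p\nmid n$. So Theorem \ref{Th-2} says precisely that $s_1(m)\equiv 0\pmod 8$ whenever some prime $p\equiv 5,7\pmod 8$ divides $8m+1$ to exactly an odd power $2k+1$. Hence the support statement should follow from the proof of Theorem \ref{Th-2}, provided that proof produces a global description of $S_1(q)$ modulo $8$ rather than a statement about one arithmetic progression at a time.

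Concretely, I would first use the identity
\[
S_1(q)=\frac{(q^2;q^2)_\infty^2}{(q;q)_\infty}T_e(q)+2qT_o(q).
\]
The first term would be handled with the modulus-$4$ description of Andrews and El Bachraoui (the theorem quoted in the introduction), lifted one $2$-adic step. The second term would be handled with the parity information on $T_o$ obtained in the proof of Theorem \ref{Th-1}, which should again come with an explicit series, not just a vanishing statement. The goal is a congruence
\[
S_1(q)\equiv \sum_{m\ge 0}(-1)^m q^{m(m+1)/2}+4\,F(q)\pmod 8,
\]
where $F(q)$ is, modulo $2$, a product of theta series. The key requirement is that, after the substitution $q\mapsto q^8$ and multiplication by $q$, the coefficient of $q^{8n+1}$ in $F$ is (mod $2$) a representation number $r(8n+1)$ by the form $x^2+2y^2$. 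Typical candidates are $\psi(q)\psi(q^2)$ or the false-theta-type sums that appear in the proof of Theorem \ref{Th-1}. Since that number is zero unless $8n+1$ is represented by $x^2+2y^2$, this gives the support statement. The triangular-number term causes no trouble, because $n=m(m+1)/2$ gives $8n+1=(2m+1)^2=x^2+2\cdot 0^2$.

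Once the support statement is in hand, the density follows from Landau's theorem, in the form due to Bernays for general binary quadratic forms. The number of integers up to $X$ represented by a fixed primitive positive definite binary quadratic form, here $x^2+2y^2$, is $\asymp X/\sqrt{\log X}$. Therefore the number of $m\le n$ with $8m+1$ so represented is $O\bigl(n/\sqrt{\log n}\bigr)=o(n)$, which gives \eqref{t-3}.

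The main obstacle is the first step: obtaining the modulus-$8$ congruence with $F(q)$ explicit enough that its coefficients mod $2$ are visibly representation numbers of $x^2+2y^2$. I would try to avoid proving $\frac{(q^2;q^2)_\infty^2}{(q;q)_\infty}T_e(q)$ modulo $8$ from scratch. Instead I would look for a formula expressing $S_1(q)$ directly through Hecke-type or false theta double sums, of the sort used for concave compositions by Andrews \cite{Andrews2013}. I would then reduce the double sum modulo $8$ by pairing terms. If a full identity is out of reach, a fallback suffices for density alone: show $s_1(m)\equiv 0\pmod 8$ whenever $8m+1$ has \emph{some} prime $\equiv 5,7\pmod 8$ to an odd power. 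This is exactly what Theorem \ref{Th-2} gives for every such prime, once the form $m=p^{2k+1}n+(p^{2k+2}-1)/8$ is checked to cover all such $m$, as computed above. In that case Theorem \ref{Th-3} follows immediately from Theorem \ref{Th-2} together with Landau's theorem.
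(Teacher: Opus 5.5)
Your fallback route is correct and differs from the paper's, but its covering step needs one more line. Your primary plan is in substance the paper's proof.

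\textbf{Your primary plan versus the paper.} The ``main obstacle'' disappears once you use the other Andrews--El Bachraoui identity, $S_1(q)=\sum_{n\ge0}(-1)^nq^{n(n+1)/2}+4qT_o(q)$, instead of the one involving $T_e$. With it you only need $qT_o(q)$ modulo $2$, which is the theta-product expression already found in the proof of Theorem \ref{Th-1}. This gives
$s_1(n)\equiv 4u(n)+4\chi(n-1)+\mu(n)\pmod 8$.
Here $u(n)$ counts representations of $24n+3=3(8n+1)$ by $2(6k+1)^2+(12m+1)^2$ or $2(6k+1)^2+(12m+5)^2$, and $\chi(n-1)$, $\mu(n)$ vanish unless $8n+1$ is a square. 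That is exactly your support statement. The paper then finishes with Landau's lacunarity theorem, so no mod-$8$ lift of the $T_e$ term and no Hecke-type double sums are needed.

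\textbf{Your fallback.} This is a different route. It uses Theorem \ref{Th-2} as a black box and then counts integers represented by $x^2+2y^2$. This shows Theorem \ref{Th-3} is a formal consequence of Theorem \ref{Th-2} and gives the explicit rate $O(n/\sqrt{\log n})$. It also avoids the small extra justification the paper needs because $u(n)$ involves half-sums $\sum_{m\ge0}$ rather than full theta series.

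\textbf{The gap in the covering step.} It is not established ``as computed above''. You only showed that $m=p^{2k+1}n+(p^{2k+2}-1)/8$ with $p\nmid n$ gives $\nu_p(8m+1)=2k+1$, not the converse. Write $8m+1=p^{2k+1}M$ with $p\nmid M$. Since $p^2\equiv1\pmod 8$, you get $M\equiv p\pmod 8$, so $M=8n+p$ with $p\nmid n$. But $n\ge0$ requires $M>p$, and this fails for some choices of $p$. For example, with $m=8$ we have $65=5\cdot13$, and $p=13$ forces $n=-1$.

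\textbf{The fix.} Take $p$ to be the \emph{smallest} prime $\equiv5,7\pmod 8$ occurring to an odd power in $8m+1$. Since $8m+1\equiv1\pmod 8$, we have $\left(\frac{-2}{8m+1}\right)=1$, so the number of such primes is even. Hence $M$ is divisible by a larger one, and $M>p$. Equivalently, $M\equiv5,7\pmod 8$ cannot be of the form $x^2+2y^2$, so it contains another such prime, which exceeds $p$ by minimality.

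With this line added, the exceptional set lies in $\{m: 8m+1 \text{ is represented by } x^2+2y^2\}$. Landau--Bernays then completes the proof.
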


\section{Proof of Theorem \ref{Th-1}}

\noindent{\it Proof of Theorem \ref{Th-1}}. 
 Euler's pentagonal number theorem states that
		\begin{align}\label{1-1}
			(q;q)_\infty&=\sum_{n=0}^\infty (-1)^n q^{n(3n+1)/2}(1-q^{2n+1})
			\nonumber\\
			&= \sum_{n{\rm \ is\ even},\atop
			n\geq 0}   q^{n(3n+1)/2}(1-q^{2n+1})
			-\sum_{n{\rm \ is\ odd},\atop
				n\geq 1}   q^{n(3n+1)/2}(1-q^{2n+1}).
		\end{align}
		Andrews and El Bachraoui \cite{Andrew} 
		 introduced  a series
		 \[
		 	P(q)= \sum_{n=0}^\infty  q^{n(3n+1)/2}(1-q^{2n+1}).
		 \]
		 It is easy to see that 
		\begin{align}\label{1-2}
			P(q)
			&= \sum_{n{\rm \ is\ even},\atop
				n\geq 0}   q^{n(3n+1)/2}(1-q^{2n+1})
			+ \sum_{n{\rm \ is\ odd},\atop
				n\geq 1}   q^{n(3n+1)/2}(1-q^{2n+1}).
		\end{align}
		Combining \eqref{1-1} and \eqref{1-2} yields 
		\begin{align}\label{1-3}
			P(q)&=2\sum_{n{\rm \ is\ even},\atop
				n\geq 0}   q^{n(3n+1)/2}(1-q^{2n+1})-(q;q)_\infty\nonumber\\
				&=2\sum_{n=0}^\infty 
				 q^{n(6n+1)}(1-q^{4n+1})-(q;q)_\infty. 
		\end{align}
			Andrews and El Bachraoui \cite{Andrew}  showed 
			 that 
			\begin{align}\label{1-4}
			 2qT_o(q)=\frac{(q^2;q^2)_\infty^2
			 }{(q;q)_\infty^2 }P(q)-\sum_{n=0}^\infty (-1)^n q^{n(n+1)/2}
			 \end{align}
		 Substituting \eqref{1-4} into \eqref{1-3} yields 
		 \begin{align}\label{1-5}
		 	 2qT_o(q)
		 	&=\frac{(q^2;q^2)_\infty^2
		 	 }{(q;q)_\infty^2 }\left(2\sum_{n=0}^\infty 
		 	 q^{n(6n+1)}(1-q^{4n+1})-(q;q)_\infty\right)
		 	 -\sum_{n=0}^\infty (-1)^n q^{n(n+1)/2}
		\nonumber\\
		 	&=2\frac{(q^2;q^2)_\infty^2
		 	}{(q;q)_\infty^2 } \sum_{n=0}^\infty 
		 	q^{n(6n+1)}(1-q^{4n+1}) -\frac{(q^2;q^2)_\infty^2}{(q;q)_\infty}
		 	-\sum_{n=0}^\infty (-1)^n q^{n(n+1)/2}. 
		 \end{align}
		 Note that 
		 \begin{align}\label{1-6}
		 	\frac{(q^2;q^2)_\infty^2}{(q;q)_\infty}
		 +\sum_{n=0}^\infty (-1)^n q^{n(n+1)/2}
		 =	 \sum_{n=0}^\infty (1+(-1)^n) q^{n(n+1)/2}= 2\sum_{n=0}^\infty q^{n(2n+1)}.
		 \end{align}
		 It follows from \eqref{1-5} and \eqref{1-6} that 
		 	 \begin{align}\label{1-7}
		 	 qT_o(q)&= \frac{(q^2;q^2)_\infty^2
		 	}{(q;q)_\infty^2 } \sum_{n=0}^\infty 
		 	q^{n(6n+1)}(1-q^{4n+1}) -\sum_{n=0}^\infty q^{n(2n+1)}\nonumber\\
		 	&\equiv  (q^2;q^2)_\infty \sum_{n=0}^\infty 
		 	q^{n(6n+1)}(1+q^{4n+1}) +\sum_{n=0}^\infty q^{n(2n+1)} \pmod 2. 
		 \end{align}
		 Here we use  the fact that 
		 \[
		 (q;q)_\infty^2\equiv (q^2;q^2)_\infty \pmod 2.\]
		 	 Note that 
		 \begin{align}\label{1-8}
		 (q^2;q^2)_\infty =\sum_{k=-\infty}^\infty (-1)^k q^{3k^2+k}
		 \equiv \sum_{k=-\infty}^\infty   q^{3k^2+k}  \pmod 2.
		 \end{align}
It follows from 
\eqref{1-7} and \eqref{1-8} that 
		 \begin{align}\label{v-2}
		 \sum_{n=0}^\infty t_o(n) q^{n+1}
		&\equiv \sum_{k=-\infty }^\infty q^{k(3k+1) }
		\sum_{m=0}^\infty q^{6m^2+m} \nonumber\\
		&\quad +
		\sum_{k=-\infty }^\infty q^{k(3k+1) }
		\sum_{m=0}^\infty q^{6m^2+5m+1}+\sum_{m=0}^\infty q^{m(2m+1)} \pmod 2.
		 \end{align}
	 Therefore, 
		\begin{align}\label{1-9}
		 t_o(n)&\equiv \sum_{k(3k+1)+6m^2+m=n+1,\atop
		 (k,m)\in \mathbb{Z}\times\mathbb{N}  }1+
		  \sum_{k(3k+1)+6m^2+5m+1=n+1,\atop
		 	(k,m)\in \mathbb{Z}\times\mathbb{N}  }1+  \chi(n ) \pmod 2 \nonumber\\
		 	&=  \sum_{2(6k+1)^2+(12m+1)^2=3(8n+9),\atop
		 		(k,m)\in \mathbb{Z}\times\mathbb{N}  } 1+
		 	\sum_{2(6k+1)^2+(12m+5)^2=3(8n+9),\atop
		 		(k,m)\in \mathbb{Z}\times\mathbb{N}  }1+  \chi(n)
		 \end{align}
		where  
		 	 \begin{align}  \label{v-3}
		 	\chi(n)= \left\{
		 	\begin{aligned}    
		 		1
		 		,\qquad \qquad  &  {\rm if  \ }  n+1=2m^2+m\ {\rm for \ some}
		 		\ m\in \mathbb{N},
		 		\\
		 		0
		 		,\qquad \qquad  &  {\rm otherwise }.
		 	\end{aligned} \right.
		 \end{align} 
		Observe that 
		 \begin{align*} 
		 	\chi(n)= \left\{
		 	\begin{aligned}    
		 		1
		 		,\qquad \qquad  &  {\rm if  \ }  8n+9=(4m+1)^2 \ {\rm for \ some}
		 		\ m\in \mathbb{N},
		 		\\
		 		0
		 		,\qquad \qquad  &  {\rm otherwise }.
		 	\end{aligned} \right.
		 \end{align*}

		 We   recall a classical result on the number of representations of a natural
		 number by the quadratic forms $x^2+2y^2$ with $x,y\in \mathbb{Z}$.
	 This result can be found in
		 many places, such as \cite{OEIS}. 
		    Let $n$ have the prime factorization
		     $n=2^{\gamma}p_1^{\alpha_1}p_2^{\alpha_2}\cdots
		      p_k^{\alpha_k}q_1^{\beta_1}q_2^{\beta_2}
		      \cdots q_{\ell}^{\beta_{\ell}}$
		       with the $p_i \equiv 1,3\pmod 8$
		        and  $q_j \equiv 5,7\pmod 8$. If 
		  $r_2(n)$ denotes  the number of
		 solutions to $x^2 + 2y^2 =n$ with $x,y\in \mathbb{Z}$,
		 then 
		 \begin{align*} 
		 	r_2(n)= \left\{
		 	\begin{aligned}    
		 		 0
		 		,\qquad \qquad  &  {\rm if\ any \ } \beta_j\ {\rm is \ odd},
		 		\\
		 	2(\alpha_1+1)(\alpha_2+1)\cdots (\alpha_k+1)
		 		,\qquad \qquad  &  {\rm otherwise }.
		 	\end{aligned} \right.
		 \end{align*} 
		 From the above formula, it follows  that 
		 if  $r_2(8n+9)=0$, then $r_2(3(8n+9))=0$.
		  Moreover, if $r_2(3(8n+9))=0$, then 
		  \begin{align}\label{1-10}
		  \sum_{2(6k+1)^2+(12m+1)^2=3(8n+9),\atop
		  	(k,m)\in \mathbb{Z}\times\mathbb{N}  } 1=0,\quad 
		  \sum_{2(6k+1)^2+(12m+5)^2=3(8n+9),\atop
		  	(k,m)\in \mathbb{Z}\times\mathbb{N}  }1=0, \quad  \chi(n)=0.
		  \end{align}
		 It follows from
		  \eqref{1-9} and \eqref{1-10} that  if $r_2( 8n+9)=0$, then
		  \[
		  t_o(n) \equiv 0 \pmod 2.
		  \]
		  This completes the proof of Theorem \ref{Th-1}.  \qed

  \section{Proofs of Theorems \ref{Th-2}
   and \ref{Th-3}}
   
  We first give a proof of Theorem \ref{Th-2}.
  
  \noindent{\it Proof of Theorem \ref{Th-2}.}
   			Andrews and El Bachraoui \cite{Andrew}
   			 proved that 
 \begin{align}
 	S_1(q)=\sum_{n=0}^\infty (-1)^n q^{n(n+1)/2}
 	+4qT_o(q). \label{3-1}
 \end{align}
Combining \eqref{0-1}, \eqref{v-2} and \eqref{3-1} yields 
    \begin{align*}
  \sum_{n=0}^\infty s_1(n)q^n&=S_1(q)=\sum_{n=0}^\infty (-1)^n q^{n(n+1)/2}
   	+4\sum_{k=-\infty }^\infty q^{k(3k+1) }
   	\sum_{m=0}^\infty q^{6m^2+m}\nonumber\\
   	&\qquad +
   	4\sum_{k=-\infty }q^{k(3k+1) }
   	\sum_{m=0}^\infty q^{6m^2+5m+1}+4\sum_{m=0}^\infty q^{m(2m+1)} \pmod 8.
   \end{align*}
Therefore, 
   \begin{align}\label{3-2}
   	s_1(n)&\equiv  4 \sum_{k(3k+1)+6m^2+m=n ,\atop
   		(k,m)\in \mathbb{Z}\times\mathbb{N}  }1+
   	4 \sum_{k(3k+1)+6m^2+5m+1=n ,\atop
   		(k,m)\in \mathbb{Z}\times\mathbb{N}  }1+ 4 \chi(n-1 ) +\mu(n) \pmod 8 , 
   	\end{align}
   	   where $\chi(n)$ is defined by \eqref{v-3}
   	    and 
   	\begin{align}  \label{3-3}
   		\mu(n)= \left\{
   		\begin{aligned}    
   			(-1)^m
   			,\qquad \qquad  &  {\rm if  \ }  n =m(m+1)/2\ {\rm for \ some}
   			\ m\in \mathbb{N},
   			\\
   			0
   			,\qquad \qquad  &  {\rm otherwise }.
   		\end{aligned} \right.
   	\end{align} 
   	We can rewrite \eqref{3-2} as 
   	\begin{align}\label{3-4}
     	s_1(n)&\equiv 4   u(n)+ 4  \chi(n-1)+\mu(n) \pmod 8,
   \end{align}
   where 
   \begin{align}\label{mm-1}
   u(n)=\sum_{2(6k+1)^2+(12m+1)^2=24n+3,\atop
   	(k,m)\in \mathbb{Z}\times\mathbb{N}  } 1+
   \sum_{2(6k+1)^2+(12m+5)^2=24n+3,\atop
   	(k,m)\in \mathbb{Z}\times\mathbb{N}  }1.
   \end{align}
From the above identity, it follows  that if $24n+3$
   is not of the form
   $x^2+2y^2$, then $u(n)=0$.
Obverse  that if $N$ is   of the form $x^2+
   2y^2$,
   then $\nu_p(N)$ is even since
   $p$ is a prime
   with  $p\equiv 5,7 \pmod 8$ 
    and $\left(
   \frac{-2 }{p}\right)=-1$. Here $\nu_p(N)$
   denotes the highest power of $p$
   dividing
   $N$ and $\left(
   \frac{ \cdot }{p}\right)$ denotes
   the Legendre symbol.
   It is easy to verify that if $p\nmid n$,
   then
   \[
   \nu_p\left(24\left(p^{2\alpha+
   	1}n+\frac{ p^{2\alpha +2}-1
   }{8}\right)+3\right)=\nu_p(24 p^{2\alpha+1}n +3p^{2\alpha+2
   })=2\alpha+1
   \]
   is odd.
   Therefore,
   $24\left(p^{2\alpha+1}n+\frac{  p^{2\alpha+2 }-1 }{8
   }\right)+3$
   is not of the form
   $x^2+2y^2$ and
   \begin{align}\label{3-5}
   	u\left(p^{2\alpha+
   		1}n+\frac{ p^{2\alpha+2 }-1  }{8}\right)=0.
   \end{align}
Moreover,  it follows from \eqref{v-3} and \eqref{3-3} that 
 for $n\geq 0$ with $p\nmid n$,
\begin{align}
	\chi\left(p^{2\alpha+
		1}n+\frac{ p^{2\alpha+2 }-1  }{8}\right)&=0,\label{3-5}\\
	 \mu	\left(p^{2\alpha+
			1}n+\frac{ p^{2\alpha+2 }-1  }{8}\right)&=0.\label{3-6}
\end{align}
Congruence \eqref{t-2} follows from \eqref{3-4}--\eqref{3-6}. 
 This completes the proof of Theorem \ref{Th-2}. 	 
 
 Now, we turn to prove Theorem \ref{Th-3}.
 
 \noindent{\it Proof of Theorem \ref{Th-3}.}
 Recall  that an integral power series $ \sum_{n\in
 	\mathbb{Z}}c(n)q^n$ is called lacunary  if
 \[
 \lim_{X\rightarrow \infty} \frac{\#\{n|c(n)\neq
 	0, \ n\leq
 	X\}}{X}=0.
 \]
 
 To prove Theorem \ref{Th-3},
 we require
 the  following classical
 result due to Landau \cite{Landau};
 see also \cite{Serre}.
 
 	Let $r(n)$ and $s(n)$ be
 	quadratic polynomials. Then
 	\[
 	\left(\sum_{n=-\infty
 	}^\infty q^{r(n)}\right)
 	\left(\sum_{n=-\infty}^\infty q^{s(n)}\right)
 	\]
 	is lacunary. From this result and \eqref{mm-1},
 	 we know 
  \begin{align}
 \lim_{n\rightarrow \infty} \frac{\#\{m|u(m)\neq
 	0, 0\leq \ m\leq
 	n\}}{n}=0.\label{mm-2}
 \end{align}
 From the definitions of $\chi(n)$ and $\mu(n)$,
  we also have 
      \begin{align} \label{mm-3}
  \lim_{n\rightarrow \infty} \frac{\#\{m|\chi (m)\neq
  	0, 0\leq \ m\leq
  	n\}}{n}=  \lim_{n\rightarrow \infty} \frac{\#\{m|\mu (m)\neq
  	0, 0\leq \ m\leq
  	n\}}{n}=0.
  \end{align}
	Identity \eqref{t-3}
	 follows from \eqref{3-4}, \eqref{mm-2}
	  and \eqref{mm-3}. This completes 
	   the proof of Theorem \ref{Th-3}. \qed

	   \section{Conclusions}
	   
	   As seen in the Introduction, two-colored integer partitions have attracted considerable attention in recent years. 
	   In this paper, we prove
	   a conjecture   given by Andrews 
	   and  El Bachraoui \cite{Andrew}
  on the parity of the coefficients 
   of the series $T_o(q)$.  We also prove an infinite families
    of congruences modulo 8 for the partition function  $s_1(n)$
     and  show that  $\frac{s_1(n)}{8}$ takes integer  values with  natural density 1 for $n\geq 0$.
	   One natural research direction is to extend
	     these congruences for $s_1(n)$ to moduli 16, 32 and higher powers of 2.
	   In addition, it would be
	   interesting
	   to determine the
	   arithmetic density of   the
	   set of
	   integers such that
	   $s_1(n)\equiv 0 \pmod
	   {2^k}$ for some  fixed positive
	   integers $k\geq 4$.

	   \section*{Statements and Declarations}
	   
	   \noindent{\bf Acknowledgments}
	    This work was supported by 
	   the National Natural Science Foundation of
	   China  (grant
	   12371334) and the Qinglan project. 
	   
	   \noindent{\bf Competing Interests.}
	   The authors declare that they have
	   no conflict of interest.

	   \noindent{\bf Data Availability Statements.} Data sharing not applicable to this
	   article as no datasets were generated or analyzed during the current
	   study.

	\end{document}